\newtheorem{theorem}{Theorem}
\newtheorem{lemma}{Lemma}
\theoremstyle{remark}
\begin{document}

\title{On Continuity Properties for Infinite Rectangle Packing}
\author{Zhiheng Liu\\
University of Science and Technology of China\\
liuzhih@mail.ustc.edu.cn}

\maketitle

\begin{abstract}
By rectangle packing we mean putting a set of rectangles into an enclosing rectangle, without any overlapping. We begin with perfect rectangle packing problems, then prove two continuity properties for parallel rectangle packing problems, and discuss how they might be used to obtain negative results for perfect rectangle packing problems.
\end{abstract}

\section{Introduction}

By rectangle packing we mean putting a set of rectangles into an enclosing rectangle, without any overlapping, in which most famous is \cite{knuth} the following problem: since $\sum_{n=1}^{\infty}\frac{1}{n\times(n+1)}=1$, whether the set of rectangles of dimension $\frac{1}{n}\times\frac{1}{n+1}$ can be packed into the unit square? There are many variants of this problem, which we will call perfect rectangle packing problems, that is, a packing with no wasted space. These problems have got some researches, among them are \cite{paulhus,chalcraft,wastlund}, which use explicit strategies/algorithms to do the packing. In particular, Chalcraft \cite{chalcraft} has proved: there is a perfect packing of the squares of side $n^{-t}$ into a square, where $1/2<t\leqslant3/5$, provided a certain algorithm succeeds for that value of $t$. 

We can intuitively interpret this result as follows. If we regard the sides of the set of rectangles as a configuration space, (that is, each series of rectangles represent a point in this space,) and the family of series of squares with sides $n^{-t}$ as a curve parametrized by $t$ in this space, then this result says: this curve lies in the "zone of perfect packing", as long as $1/2<t\leqslant3/5$ and a certain algorithm succeeds for that value of $t$. This idea is one motivation of the results to be stated in the next section.

Besides, we notice Altunkaynak \cite{baris} has turned the perfect rectangle packing problem into an equivalent algebraic problem.

For simplicity and fluency we will confine to (fixed-direction) parallel rectangle packing, i.e.\ rectangle packing with the sides of rectangles parallel to the enclosing rectangle and whose directions are prescribed. It seems not hard to generalize to non-parallel cases and other shapes. 

\section{First Continuity Property}

Let $A$ be a set of rectangles $A_1,A_2,...,A_n$ of size $w_1\times{l_1},w_2\times{l_2},...,w_n\times{l_n}$ (as ordered pairs), let $S(A)=\sum_{i=1}^{n}w_il_i$ to be the total area of $A$. (We do \textit{not} suppose $w_i\leqslant{l}_i$ throughout this section.)

By a \emph{positioning} of $A$, we mean putting $A$ onto the coordinate plane, such that the interior of $A_i$ does not overlap, the sides of $A_i$ are parallel to coordinate axes, and in particular, the sides with length $w_i$ parallel to $x$-axis.

If $A_M$ is a positioning of $A$, $A_M$ contains the information of the coordinates of $A_i$. Let's say the coordinate of the lower left corner of $A_i$ is $(x_i^-,y_i^-)$, the upper right corner $(x_i^+,y_i^+)$. Define $p(A_M)=\max_{i,j}(x_i^+-x_j^-)$, $q(A_M)=\max_{i,j}(y_i^+-y_j^-)$, and $T(A_M)=p(A_M){q(A_M)}$ to be the "bounding area" of $A_M$. Define \emph{packing efficiency} to be $\eta(A_M)=S(A)/T(A_M)$.

We define \emph{best packing efficiency} of $A$ to be $\eta_0(A)=\sup{\eta(A_M)}$, where the sup is over all positioning of $A$. The fact that this sup can be attained (i.e.\ there exists a positioning of $A$ which has the best packing efficiency) is because, see \cite{martin}.

For \textit{every} $A$, choose a positioning of $A$ with the best packing efficiency, denote it as $A_0$, and fix our choice. Since $T(A_0)=S(A)/\eta_0(A)$, which is irrelevant of the choice, we define $T_0(A)=T(A_0)$. (Alternatively, $T_0(A)=\inf{T}(A_M)$.)

Two operations on the positioning will be key to the proof of the results. Let $A_M$ be a positioning of $A$, with coordinates $x_i^-,y_i^-,x_i^+,y_i^+$.

(I)\emph{retraction}: Let $\Delta{x}$ be a number such that $0<\Delta{x}<w_1$. We modify $A_M$ as follows: let the coordinates of $A_1$ be $x_1^-,y_1^-,(x_1^+-\Delta{x}),y_1^+$, and other coordinates remain the same. This is a retraction (of x-side of $A_1$ in $A_M$).

(II)\emph{extension}: Let $\Delta{x}$ be a number such that $0<\Delta{x}$. We modify $A_M$ as follows: let the coordinates of $A_1$ be $x_1^-,y_1^-,(x_1^++\Delta{x}),y_1^+$. For $k>1$, if $x_k^-\geqslant{x}_1^+$, let the coordinates of $A_k$ be $(x_k^-+\Delta{x}),y_k^-,(x_k^++\Delta{x}),y_k^+$, otherwise remain the same. This is an extension (of x-side of $A_1$ in $A_M$).

After each operation, since the size of $A_1$ has changed, denote it as $A'_1$. It's not hard to verify that a retraction or extension of $A_M$ as described above is indeed a positioning of $A'_1,A_2,...,A_n$.

We need the following:

\begin{lemma}\label{estimation}
(i)Denote $W$ as the retraction of $A_M$ described above. Then $T(W)\leqslant{T}(A_M)$.\\
(ii)Denote $W$ as the extension of $A_M$ described above. Then $T(W)\leqslant{T}(A_M)+q(A_M)\Delta{x}$.\\
(iii)Denote $W$ as the extension of $A_M$ described above. Further extend y-side of first rectangle of $W$ by $\Delta{y}$. Denote this extension as $V$. Then $T(V)\leqslant{T}(A_M)+q(A_M)\Delta{x}+p(A_M)\Delta{y}+\Delta{x}\Delta{y}$.
\end{lemma}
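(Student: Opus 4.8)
The plan is to control the two one-dimensional bounding lengths $p$ and $q$ separately and then multiply, exploiting the fact that each of the three operations moves only a few of the coordinates $x_i^\pm,y_i^\pm$, and moves them in a controlled direction. For (i) this is immediate: a retraction changes no $y$-coordinate, so $q(W)=q(A_M)$; among the $x$-coordinates it replaces $x_1^+$ by $x_1^+-\Delta x$ and leaves every other $x_i^+$ and every $x_j^-$ fixed, so each difference $x_i^+-x_j^-$ weakly decreases and hence $p(W)=\max_{i,j}(x_i^+-x_j^-)\leqslant p(A_M)$. Multiplying gives $T(W)=p(W)q(W)\leqslant T(A_M)$.

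For (ii), again no $y$-coordinate changes, so $q(W)=q(A_M)$ and it is enough to show $p(W)\leqslant p(A_M)+\Delta x$. I would partition the rectangles into three classes according to how the extension treats their $x$-coordinates: the \emph{shifted} ones (those $A_k$ with $k>1$ and $x_k^-\geqslant x_1^+$, whose $x^-$ and $x^+$ both increase by $\Delta x$), the distinguished rectangle $A_1'$ (whose $x^+$ increases by $\Delta x$ but whose $x^-$ does not move), and the remaining rectangles (untouched in $x$). Running through the cases for which classes $i$ and $j$ belong to, one checks each time that $x_i^+(W)-x_j^-(W)\leqslant p(A_M)+\Delta x$; the extra $+\Delta x$ enters only when the chosen right edge has moved while the chosen left edge has not, and in the opposite mixed case the span in fact drops by $\Delta x$. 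Taking the maximum over $i,j$ gives $p(W)\leqslant p(A_M)+\Delta x$, hence $T(W)\leqslant(p(A_M)+\Delta x)q(A_M)=T(A_M)+q(A_M)\Delta x$.

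For (iii), the step from $W$ to $V$ is exactly the extension operation applied to the $y$-side of the first rectangle, so it disturbs only $y$-coordinates: thus $p(V)=p(W)\leqslant p(A_M)+\Delta x$ by (ii), while the argument of (ii) run in the $y$-direction gives $q(V)\leqslant q(W)+\Delta y=q(A_M)+\Delta y$. Therefore
\[
T(V)=p(V)q(V)\leqslant\bigl(p(A_M)+\Delta x\bigr)\bigl(q(A_M)+\Delta y\bigr)=T(A_M)+q(A_M)\Delta x+p(A_M)\Delta y+\Delta x\,\Delta y .
\]
Everything is elementary; the one place that wants genuine care is the case check in (ii), where one must be sure to cover every ordered pair $(i,j)$ and to treat $A_1'$ consistently (its right edge moves, its left edge does not). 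I expect that bookkeeping to be the main — though quite modest — obstacle, after which (i) and (iii) are just variations of the same monotonicity observation.
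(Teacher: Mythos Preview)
Your proof is correct and follows exactly the paper's approach: bound $p$ and $q$ separately by the inequalities $p(W)\leqslant p(A_M)$, $q(W)=q(A_M)$ for (i), $p(W)\leqslant p(A_M)+\Delta x$, $q(W)=q(A_M)$ for (ii), and $p(V)=p(W)$, $q(V)\leqslant q(W)+\Delta y$ for (iii), then multiply. The paper merely states these inequalities without the case analysis you supply, so your write-up is a more detailed version of the same argument.
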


\begin{proof}
(i) follows from $p(W)\leqslant{p}(A_M)$, $q(W)=q(A_M)$.
(ii)$\&$(iii) follows from $p(W)\leqslant{p}(A_M)+\Delta{x}$, $q(W)=q(A_M)$, $p(V)=p(W)$, $q(V)\leqslant{q}(W)+\Delta{y}$.
\end{proof}

From now on we denote, as vectors, $A=(w_1,l_1,...,w_n,l_n)$, $\Delta{A}=(\Delta{w}_1,\Delta{l}_1,...,\Delta{w_n},\Delta{l_n})$.

\begin{lemma}\label{p_0}
Let $A'=A+\Delta{A}$. Then $p(A'_0)\leqslant\sum_{i=1}^n{w_i}+\sum_{i=1}^n{|\Delta{w_i}|}$. The similar for q.
\end{lemma}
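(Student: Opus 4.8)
The plan is to reduce the inequality to a bound that does not involve $\Delta A$ at all, and then to prove that bound for the chosen optimal positioning $A'_0$. Write $w_i' = w_i + \Delta w_i$ and $l_i' = l_i + \Delta l_i$ for the dimensions of $A'$; these are all positive, since otherwise $A'$ is not a legitimate set of rectangles and the statement is vacuous. The triangle inequality gives $\sum_{i=1}^{n} w_i' = \sum_{i=1}^{n} w_i + \sum_{i=1}^{n} \Delta w_i \le \sum_{i=1}^{n} w_i + \sum_{i=1}^{n} |\Delta w_i|$, so it suffices to prove $p(A'_0) \le \sum_{i=1}^{n} w_i'$; the statement for $q$ will follow by the symmetric argument with the two coordinate axes interchanged.

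To prove $p(A'_0) \le \sum_{i=1}^{n} w_i'$ I would use two elementary observations about bounding boxes. First, every positioning $A'_M$ satisfies $q(A'_M) = \max_{i,j}(y_i^+ - y_j^-) \ge y_k^+ - y_k^- = l_k'$ for each $k$, hence $q(A'_M) \ge \max_k l_k'$; likewise $p(A'_M) \ge \max_k w_k'$. Second, the "side-by-side" positioning — place the rectangles of $A'$ in one non-overlapping row with their bottom edges on the $x$-axis — is a legitimate positioning with $p = \sum_{i=1}^{n} w_i'$ and $q = \max_k l_k'$, so that $T_0(A') = \inf_M T(A'_M) \le \big(\sum_{i=1}^{n} w_i'\big)\big(\max_k l_k'\big)$.

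Combining the two: by definition $T_0(A') = p(A'_0)\,q(A'_0)$ with $q(A'_0) \ge \max_k l_k' > 0$, so $p(A'_0) = T_0(A') / q(A'_0) \le \big(\sum_{i=1}^{n} w_i'\big)\big(\max_k l_k'\big) / \max_k l_k' = \sum_{i=1}^{n} w_i'$, which is what we wanted. For $q$, one instead uses $p(A'_0) \ge \max_k w_k'$ together with the "stacked" positioning (a single column), giving $T_0(A') \le \big(\max_k w_k'\big)\big(\sum_{i=1}^{n} l_i'\big)$ and hence $q(A'_0) \le \sum_{i=1}^{n} l_i'$.

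An alternative, more hands-on route avoids the auxiliary positionings: if $p(A'_0) > \sum_{i=1}^{n} w_i'$, then the $x$-projections of the rectangles are closed intervals of total length $\sum w_i' < p(A'_0)$, so they cannot cover the interval $[\min_j x_j^-, \max_i x_i^+]$; a point $c$ in this interval lying in no projection gives a vertical line $x = c$ meeting no rectangle, splitting the rectangles into two nonempty groups, and sliding the right-hand group leftwards to close the gap produces a positioning with strictly smaller $p$, unchanged $q$, hence strictly smaller $T$ — contradicting the optimality of $A'_0$. I do not expect a real obstacle here; the one point deserving care is that the argument genuinely needs the optimality (best packing efficiency) of $A'_0$, since an arbitrary positioning can be spread out to make $p$ as large as one pleases, together with the standing assumption that $\Delta A$ is small enough that all entries of $A'$, and hence $A'_0$ itself, are defined.
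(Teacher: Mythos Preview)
Your proposal is correct. In fact you give two independent arguments, and your ``alternative, more hands-on route'' is precisely the paper's own proof: the paper also argues by contradiction, observing that if $p(A'_0)$ exceeds the total width then the $x$-projections of the rectangles leave a gap, and squeezing that gap out yields a positioning with strictly smaller bounding area, contradicting optimality.

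Your primary argument is a genuinely different route. Instead of locally improving a hypothetical bad optimum, you compare $A'_0$ globally against the single-row positioning: from $T_0(A') \le \big(\sum_i w_i'\big)\max_k l_k'$ and the universal lower bound $q(A'_0) \ge \max_k l_k'$ you read off $p(A'_0) = T_0(A')/q(A'_0) \le \sum_i w_i'$ directly. This is shorter and avoids the contradiction step entirely; it also makes transparent that the bound $p(A'_0) \le \sum_i w_i'$ (which is sharper than the lemma as stated) holds for \emph{every} optimal positioning, since only the equalities $T(A'_\omega)=T_0(A')$ and the trivial $q(A'_\omega)\ge\max_k l_k'$ are used. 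The paper's gap-squeezing argument, on the other hand, is more geometric and perhaps generalizes more readily to settings where a convenient comparison positioning is not at hand.
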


\begin{proof}
Let $B=\{A'_\omega\text{ is a positioning of }A'\mid\eta(A'_\omega)=\eta_0(A')\}$. We argue that there cannot be $A'_\omega\in{B}$ such that $p(A'_\omega)>\sum_{i=1}^n{w_i}+\sum_{i=1}^n{|\Delta{w_i}|}$. (Since $A'_0\in{B}$, this will complete the proof.) Otherwise, suppose such $A'_\omega$ exists. We construct a positioning of $A'$ as follows: Consider the projection of the rectangles in $A'_\omega$ onto the $x$-axis. Since $p(A'_\omega)>\sum_{i=1}^n{w_i}+\sum_{i=1}^n{|\Delta{w_i}|}$, there must be gaps in between. "Squeeze" them out, we get a new positioning of $A'$, denote it as $A'_L$. Since $p(A'_L)<p(A'_\omega)$, $q(A'_L)=q(A'_\omega)$, we have $T(A'_L)<T(A'_\omega)$. But this contradicts the fact $T(A'_\omega)=T_0(A')$.
\end{proof}

\begin{lemma}\label{t_0}
Let $A'=A+\Delta{A}$. Then $T_0(A')\rightarrow{T_0(A)}\ (\Delta{A}\rightarrow\mathbf{0})$.
\end{lemma}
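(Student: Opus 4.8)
The plan is to show that $|T_0(A')-T_0(A)|$ is bounded by a constant (depending only on $A$) times $\|\Delta A\|:=\sum_{i=1}^n(|\Delta w_i|+|\Delta l_i|)$, at least once $\|\Delta A\|\leqslant 1$; this bound immediately yields $T_0(A')\to T_0(A)$.

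First I would bound $T_0(A')-T_0(A)$ from above. Start from the chosen optimal positioning $A_0$ of $A$ and transform it, one rectangle at a time, into a positioning of $A'$: for each index $i$ in turn, relabel $A_i$ as the ``first'' rectangle and, according to the signs of $\Delta w_i$ and $\Delta l_i$, apply to it a retraction (operation (I)) or an extension (operation (II)) of its $x$-side, and likewise of its $y$-side, so that after both steps its size is $w_i'\times l_i'$. (When both sides must be extended, Lemma~\ref{estimation}(iii) applies directly; when one side shrinks, compose operation (I) with operation (II)/(iii) via the obvious symmetry, retracting before extending.) For small $\|\Delta A\|$ the constraint $0<\Delta x<w_1$ for retractions is met, and each operation produces a valid positioning, so the end result $A'_M$ is a positioning of $A'$ and $T_0(A')\leqslant T(A'_M)$. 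By Lemma~\ref{estimation}, retractions never increase the bounding area, and each $x$-extension (resp.\ $y$-extension) of the current rectangle by an amount $\Delta$ increases it by at most $q(\cdot)\Delta$ (resp.\ $p(\cdot)\Delta$) plus a second-order term, where $p(\cdot),q(\cdot)$ are evaluated at the \emph{intermediate} positioning.

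The crux is that these intermediate values stay uniformly bounded. Indeed $p$ and $q$ grow only through extensions, and over the whole process the total growth of $p$ is at most $\sum_i|\Delta w_i|$ and that of $q$ at most $\sum_i|\Delta l_i|$; since $p(A_0)\leqslant\sum_i w_i$ and $q(A_0)\leqslant\sum_i l_i$ (Lemma~\ref{p_0} with $\Delta A=\mathbf 0$, or directly, as an optimal positioning has no gaps), every intermediate $p$ is at most $\sum_i w_i+1$ and every intermediate $q$ at most $\sum_i l_i+1$ once $\|\Delta A\|\leqslant1$. Summing the estimates of Lemma~\ref{estimation} over all the operations then gives $T(A'_M)\leqslant T_0(A)+C(A)\|\Delta A\|$, with $C(A)$ depending only on $A$ (the second-order terms $\Delta w_i\,\Delta l_i$ are dominated by $\|\Delta A\|$ and absorbed). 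Hence $T_0(A')\leqslant T_0(A)+C(A)\|\Delta A\|$. For the reverse inequality I would run the identical argument with the roles of $A$ and $A'$ swapped, using $A=A'+(-\Delta A)$, obtaining $T_0(A)\leqslant T_0(A')+C(A')\|\Delta A\|$; by Lemma~\ref{p_0} the constant $C(A')$, which is built from $\sum_i w_i'$ and $\sum_i l_i'$, is itself bounded (say by $C(A)+1$) for $\|\Delta A\|\leqslant1$. Combining the two directions gives $|T_0(A')-T_0(A)|\leqslant(C(A)+1)\|\Delta A\|\to0$.

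I expect the difficulty to be bookkeeping rather than conceptual: the two points requiring care are (a) correctly composing operations (I) and (II)/(iii) for rectangles whose width and length change in arbitrary directions, and noting that (I) and (II) — stated only for the first rectangle — may be applied to any $A_i$ after relabelling since they preserve the positioning property; and (b) making the estimate on the intermediate $p$ and $q$ precise enough that the accumulated error is genuinely linear in $\|\Delta A\|$, so that it vanishes in the limit. No single step is hard, but the uniform control of $p(\cdot)$ and $q(\cdot)$ along the chain of operations is the step on which the argument really rests.
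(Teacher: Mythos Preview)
Your proposal is correct and follows essentially the same approach as the paper: transform the optimal positioning of one configuration into a positioning of the other via the retraction and extension operations, bound the resulting change in bounding area with Lemma~\ref{estimation}, and control the relevant $p$ and $q$ values via Lemma~\ref{p_0}. The only cosmetic difference is that the paper organizes the two directions as a case split on the sign of $\Delta T_0$ (using $p(A_0),q(A_0)$ directly in the forward case), whereas you bound both directions symmetrically; your more explicit tracking of intermediate $p,q$ values along the chain of operations is, if anything, a cleaner justification of the same estimate.
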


\begin{proof}
For convenience, denote $T_0(A')$ as $T'_0$, $T_0(A)$ as $T_0$, let $\Delta{T_0}=T'_0-T_0$. Let $I_w$,$J_w$,$I_l$,$J_l$ be subset of the set $\{1,2,...,n\}$, such that $I_w$ and $J_w$ is disjoint, $I_l$ and $J_l$ is disjoint. Let $Z(I_w,J_w,I_l,J_l)=\{(z_{11},z_{12},...,z_{n1},z_{n2})\in\mathbb{R}^{2n}\mid\text{if }k\in{I_w},z_{k1}>0,\text{if }k\in{J_w},z_{k1}<0,\text{else }z_{k1}=0,\text{ the similar for }z_{k2}\}$. Let $Z(A)=\{(z_{11},z_{12},...,z_{n1},z_{n2})\mid{w_k+z_{k1}>0, l_k+z_{k2}>0}\text{ for all }k\}$.(which is the natural boundary condition for $\Delta{A}$) There are two cases:

(I)$\Delta{T_0}\geqslant0$. We construct a positioning of $A'$ as follows: make retractions and extensions on $A_0$. If $k\in{I_w}$, make an extension on $w_k$ of $A_0$, if $k\in{J_w}$, make a retraction on $w_k$ of $A_0$, the similar for $l_k$. Denote this positioning of $A'$ as $A'_w$. For $\Delta{A}\in{Z}(I_w,J_w,I_l,J_l)\cap{Z}(A)$ and satisfies the condition for this case, we have
\begin{multline}
\nonumber
|\Delta{T_0}|=\Delta{T_0}\leqslant{T(A'_w)}-T_0\\
\leqslant{T}_0+q(A_0)\sum_{k\in{I_w}}\Delta{w_k}+p(A_0)\sum_{k\in{I_l}}\Delta{l_k}+(\sum_{k\in{I_w}}\Delta{w_k})(\sum_{k\in{I_l}}\Delta{l_k})-T_0\quad\text{(by Lemma \ref{estimation})}\\
=q(A_0)\sum_{k\in{I_w}}\Delta{w_k}+p(A_0)\sum_{k\in{I_l}}\Delta{l_k}+(\sum_{k\in{I_w}}\Delta{w_k})(\sum_{k\in{I_l}}\Delta{l_k})\\
\end{multline}
So, for all $\Delta{A}\in{Z}(A)$, as long as $\Delta{T_0}\geqslant0$, $|\Delta{T_0}|\leqslant{q}(A_0)\sum_{k=1}^n|\Delta{w_k}|+p(A_0)\sum_{k=1}^n|\Delta{l_k}|+(\sum_{k=1}^n|\Delta{w_k}|)(\sum_{k=1}^n|\Delta{l_k}|)$.

(II)$\Delta{T_0}<0$. The situation is similar except that we construct a positioning of $A$ based on $A'_0$, and to deal with $p(A'_0)$ and $q(A'_0)$ we need to use Lemma \ref{p_0}. We get, for all $\Delta{A}\in{Z}(A)$, as long as $\Delta{T_0}<0$, $|\Delta{T_0}|\leqslant(\sum_{k=1}^n{l_k}+\sum_{k=1}^n{|\Delta{l_k}|})(\sum_{k=1}^n|\Delta{w_k}|)+(\sum_{k=1}^n{w_k}+\sum_{k=1}^n{|\Delta{w_k}|})(\sum_{k=1}^n|\Delta{l_k}|)+(\sum_{k=1}^n|\Delta{w_k}|)(\sum_{k=1}^n|\Delta{l_k}|)$.

Combine these two cases, we have for all $\Delta{A}\in{Z}(A)$, the estimation as stated in case (II) is true, which would derive the result. 
\end{proof}

\begin{theorem}
$\eta_0:\mathbb{R}_{>0}^{2n}\mapsto[0,1]: (w_1,l_1,w_2,l_2,...,w_n,l_n)\mapsto\eta_0$ is a continuous map.
\end{theorem}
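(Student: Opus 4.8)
The plan is to reduce the statement to the two facts that are, essentially, already in hand: that the area functional $S$ and the optimal bounding area $T_0$ are both continuous on $\mathbb{R}_{>0}^{2n}$, and that $T_0$ never vanishes there. Since $\eta_0(A) = S(A)/T_0(A)$ by definition, the theorem will follow from the quotient rule for continuous functions.

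First I would note that $S(A) = \sum_{i=1}^n w_i l_i$ is a polynomial in the $2n$ coordinates, hence continuous, and strictly positive on $\mathbb{R}_{>0}^{2n}$. For the denominator, Lemma \ref{t_0} already provides exactly the continuity we need: it asserts $T_0(A + \Delta A) \to T_0(A)$ as $\Delta A \to \mathbf{0}$, and the explicit estimate in its proof bounds $|\Delta T_0|$ by a sum of terms each tending to $0$ as $\Delta A \to \mathbf{0}$, valid for every $\Delta A$ in the natural domain $Z(A)$, which contains a neighborhood of $\mathbf{0}$. So $T_0$ is continuous on $\mathbb{R}_{>0}^{2n}$.

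Next I would record the elementary inequality $S(A) \leq T_0(A)$: in any positioning $A_M$ the rectangles have pairwise disjoint interiors and all lie inside the bounding box, whose area is $T(A_M)$, so $S(A) \leq T(A_M)$ for every positioning, and hence $S(A) \leq T_0(A)$. In particular $T_0(A) > 0$ everywhere on $\mathbb{R}_{>0}^{2n}$, so $\eta_0 = S/T_0$ is a ratio of continuous functions with non-vanishing denominator, therefore continuous; and the same inequality gives $0 < \eta_0(A) \leq 1$, so $\eta_0$ indeed takes values in $[0,1]$ (in fact in $(0,1]$).

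Since all of the genuine work lies in Lemma \ref{t_0}, there is no real obstacle left here; the only points needing care are purely bookkeeping — confirming that the estimate proved for $\Delta T_0$ in Lemma \ref{t_0} is a statement about a full neighborhood of $\mathbf{0}$ in parameter space (so that it yields two-sided continuity rather than merely a restricted-direction statement), and confirming $T_0 \geq S$ so that the quotient is legitimate and bounded by $1$.
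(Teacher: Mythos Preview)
Your proposal is correct and matches the paper's own proof essentially verbatim: the paper simply observes that $\eta_0 = S/T_0$, that $S$ is obviously continuous, and that $T_0$ is continuous by Lemma~\ref{t_0}. Your additional remarks (that $T_0 \geqslant S > 0$ so the quotient is well-defined and lands in $(0,1]$, and that the estimate in Lemma~\ref{t_0} covers a full neighborhood via $Z(A)$) are sound and make explicit what the paper leaves implicit.
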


\begin{proof}
By definition, $\eta_0(A)=S(A)/T_0(A)$, and S is obviously continuous, $T_0$ is continuous by Lemma \ref{t_0}, thus the result.
\end{proof}


\section{Second Continuity Property}

\textbf{(The meaning of some notations will be changed in this section, as described below:)}

Let $A$ be a set of rectangles $A_1,A_2,...,A_n,...$ of size $w_1\times{l_1},w_2\times{l_2},...,w_n\times{l_n},...$ satisfying following conditions (to rule out irregular cases):

(C1) $\sum_{i=1}^{\infty}l_i^2$ is finite

(C2) $w_i\leqslant{l_i}$, and $l_{i+1}\leqslant{l}_i$

Let $S(A)=\sum_{i=1}^{\infty}w_il_i$. Let $A^n$ denote the first n rectangles of $A$.

If $A_M$ is a positioning of $A$, define $p(A_M)$, $q(A_M)$, $T(A_M)$, $\eta(A_M)$, $\eta_0(A)$, $A_0$, $T_0(A)$ the same way as before. If any of these is infinite or does not exist, leave it as undefined.

\begin{theorem}
$\eta_0(A)$ is well-defined and $\eta_{0}(A^n)\rightarrow\eta_{0}(A)\ (n\rightarrow\infty)$.
\end{theorem}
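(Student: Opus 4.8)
The plan is to show two things: first, that $\eta_0(A)$ is well-defined, i.e.\ that the supremum of $\eta(A_M)$ over positionings of the infinite family $A$ exists and is finite; second, that $\eta_0(A^n)$ converges to $\eta_0(A)$ as $n\to\infty$. For well-definedness, I would first check that $S(A)=\sum_i w_i l_i$ is finite: by (C2) we have $w_i\le l_i$, so $w_il_i\le l_i^2$, and (C1) gives $\sum_i l_i^2<\infty$, hence $S(A)<\infty$. The efficiency $\eta(A_M)=S(A)/T(A_M)$ of any positioning is at most $1$ since the bounding box must contain all the rectangles without overlap, so $T(A_M)\ge S(A)$; thus $\eta_0(A)=\sup\eta(A_M)\in[0,1]$ provided at least one positioning of $A$ with finite $T$ exists. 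I would exhibit one explicitly: stack the rectangles $A_1,A_2,\dots$ in a vertical column, giving $p=\sup_i w_i=w_1\le l_1$ and $q=\sum_i l_i$, which is finite because $\sum l_i^2<\infty$ and $l_i$ is non-increasing (so $l_i\le l_1$ and $\sum l_i\le \sum l_i^2/l_\infty$ only if $l_i$ bounded below — better: use that $l_i\to 0$ and Cauchy–Schwarz fails for $\sum l_i$ in general, so instead I should note this needs care).

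Here is the subtle point I would address head-on. Condition (C1) says $\sum l_i^2<\infty$, which does not by itself force $\sum l_i<\infty$, so a naive vertical stack need not have finite height. However, $\eta_0(A^n)$ is defined for each finite truncation (by the first section, or by the remark that the sup is attained), and $T_0(A^n)$ is non-decreasing in $n$ (adding a rectangle can only increase the minimal bounding area, since any positioning of $A^{n+1}$ restricts to one of $A^n$ with no larger bounding box — actually one must check the bounding box of the sub-positioning is no larger, which holds because $p,q$ are maxima over a subset). Also $S(A^n)$ is non-decreasing and bounded by $S(A)$. So I would \emph{define} $\eta_0(A)$ and show it equals $\lim_n \eta_0(A^n)$ simultaneously, by proving that $\lim_n T_0(A^n)$ is finite and that $\sup$ over positionings of $A$ of $\eta$ equals $S(A)/\lim_n T_0(A^n)$.

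The key steps, in order: (1) Show $T_0(A^n)$ is increasing and bounded above. For boundedness I would build a good positioning of all of $A$ — not a single column, but a "shelf" packing: place rectangles in rows, starting a new row each time the current row's width would exceed some fixed bound $L\ge l_1$; a standard area/shelf estimate shows the total height is at most $\big(\sum_i w_i l_i\big)/L + \sum(\text{wasted height per shelf})$, and the wasted height is controlled because $l_i\to 0$ and $l_i$ is non-increasing, so each shelf after the first has height at most that of the tallest rectangle starting it, and these heights are summable against the areas. This gives a positioning of $A$, hence of every $A^n$, with bounding area $\le$ some finite $C$; so $T_0(A^n)\le C$ and the limit $T^*:=\lim_n T_0(A^n)$ exists and is finite, with $T^*\ge S(A^n)$ for all $n$, hence $T^*\ge S(A)$. (2) For any positioning $A_M$ of $A$ with finite $T(A_M)$, its restriction to $A^n$ has bounding area $\le T(A_M)$, so $T_0(A^n)\le T(A_M)$; letting $n\to\infty$, $T^*\le T(A_M)$. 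Thus $T^*=\inf_{A_M} T(A_M)$ over positionings of $A$, i.e.\ $T^*=T_0(A)$, which shows $T_0(A)$ is well-defined and finite, hence $\eta_0(A)=S(A)/T_0(A)$ is well-defined in $[0,1]$. (3) Finally $\eta_0(A^n)=S(A^n)/T_0(A^n)\to S(A)/T^* = \eta_0(A)$ since both numerator and denominator converge and the denominator is bounded away from $0$ (it is $\ge S(A^1)=w_1l_1>0$ assuming $A$ nonempty).

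The main obstacle is step (1): constructing a single positioning of the \emph{entire} infinite family $A$ with finite bounding area, using only (C1) and (C2). The vertical-stack idea is too weak since $\sum l_i$ may diverge; the shelf/NFDH-style construction is the natural fix, and the non-increasing condition $l_{i+1}\le l_i$ together with $\sum l_i^2<\infty$ (which forces $l_i\to0$) is exactly what makes the wasted-space-per-shelf sum converge. I would need to verify carefully that the total shelf height is bounded — roughly, shelf $j$ holds rectangles of total width about $L$ and height at most $h_j$ (the height of its first, tallest rectangle), contributing area about $L h_j$ if full, or being the last shelf otherwise; summing, $\sum_j L h_j \lesssim S(A) + L h_1$ is not quite right, so the honest estimate is $\sum_j h_j \le h_1 + \frac{1}{L}\sum_i w_i l_i \cdot(\text{something})$, and I would instead bound $\sum_j h_j$ directly by grouping: since the $l_i$ are non-increasing, $\sum_j h_j \le \sum_i l_i \mathbf{1}[\text{$i$ starts a shelf}]$ and each shelf (except possibly the last) has width $\ge L - l_1$, forcing $\ge (L-l_1)/w_1$... — the cleanest route is to note $\sum_i w_i l_i < \infty$ with $w_i \le l_i \le l_1$ bounded, and choose $L = l_1$, packing greedily; the number of "full" shelves is finite-rate-controlled by area. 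I would present the shelf estimate as a short self-contained lemma. Everything else (monotonicity of $T_0$ under truncation, the $\inf$ characterization, passing to the limit) is routine once step (1) is in hand.
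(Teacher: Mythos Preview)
Your argument has a genuine gap in step (2). You correctly show that $T^*:=\lim_n T_0(A^n)$ exists (monotone and bounded via the shelf packing) and that restriction gives $T_0(A^n)\le T(A_M)$ for every positioning $A_M$ of the full family, hence $T^*\le \inf_{A_M} T(A_M)=T_0(A)$. But you then assert $T^*=T_0(A)$ without justifying the reverse inequality $T_0(A)\le T^*$. For that you would need, for each $\varepsilon>0$, a positioning of \emph{all} of $A$ with bounding area at most $T^*+\varepsilon$. Your single fixed shelf packing only gives a positioning with some finite bounding area $C$, and nothing you have written forces $C$ close to $T^*$; a priori $C$ could be much larger than $T^*$ while $T_0(A)$ sits strictly between them.

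The paper closes exactly this gap by a different construction: it takes the optimal finite positioning $(A^n)_0$ and \emph{extends} it to a positioning $A_w$ of the full family by packing the tail $A_{n+1},A_{n+2},\dots$ into a small box (via Meir--Moser) adjoined to one side of $(A^n)_0$. This yields $T_0(A)\le T(A_w)\le T_0(A^n)+\varepsilon_n$ with $\varepsilon_n\to 0$, which is the missing inequality. Your shelf idea could be salvaged in the same spirit: apply it not to all of $A$ at once, but to the tail beyond index $n$, placed beside $(A^n)_0$, and show the added area tends to $0$ (your own shelf estimate, once made precise, gives tail height $\le l_{n+1}+(\sum_{k>n}w_kl_k+\sum_{k>n}l_k^2)/L$ in width $L$, and one then has to choose $L$ so the box fits against $(A^n)_0$ and its contribution vanishes). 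As written, however, the proposal proves only $T_0(A^n)\uparrow T^*\le T_0(A)\le C$ and stops short of the convergence $T_0(A^n)\to T_0(A)$.
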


\begin{proof}
Since $\eta_{0}(A^n)=S(A^n)/T_0(A^n)$, $\eta_{0}(A)=S(A)/T_0(A)$, and $S(A^n)\rightarrow{S}(A)$, we only need to prove $T_0(A)$ is finite and $T_0(A^n)\rightarrow{T_0(A)}$. We construct a positioning of $A$ as follows:

We use Theorem 4 in \cite{meir} by Meir and Moser. Using their method we put all remaining rectangles into a box of size $b\times{a}$ on the right side of $(A^n)_0$. (By our definition of positioning, condition (C2), and the construction in the proof of their theorem, the "a" side in their theorem must be parallel to our $y$-axis.) Denote this positioning as $A_w$. Denote $R_n=\sum_{k=n+1}^{\infty}{l_k^2}$, which $\rightarrow0$ by condition (C1). If $a$ satisfies $l_{n+1}\leqslant{a}\leqslant{q}((A^n)_0)$, we have
\begin{multline}
\nonumber
T(A_w)-T_0(A^n)\leqslant{q((A^n)_0)}[({2\sum_{k=n+1}^{\infty}{w_kl_k}+a^2/8})/a]\quad\text{by assumption on $a$ and Theorem 4 in \cite{meir}}\\
\leqslant[{T_0(A^n)}/w_1][({2R_n+a^2/8})/a]\quad\text{by $p((A^n)_0)\geqslant{w_1}$ and condition (C2)}
\end{multline}
 We need to choose an appropriate $a$. Choose $a=4\sqrt{R_n}$ (which minimizes). $4\sqrt{R_n}\geqslant{l}_{n+1}$ is obvious, and since $q((A^n)_0)\geqslant{l_1}$ and $R_n\rightarrow0$, $4\sqrt{R_n}\leqslant{q}((A^n)_0)$ will be true for all $n$ sufficiently large. Then $T_0(A)-T_0(A^n)\leqslant{T(A_w)-T_0(A^n)}\leqslant{T_0(A^n)}\sqrt{R_n}/w_1$ for all $n$ sufficiently large.

Now $\eta_0(A)$, $A_0$, $T_0(A)$ are well-defined. It remains to show $T_0(A^{n})\leqslant{T}_0(A)$. Construct a positioning of $A^n$ as follows: remove all the k-th rectangles from $A_0$ where $k>n$. Denote this positioning as $A^n_v$. Then $T_0(A^n)\leqslant{T}(A^n_v)\leqslant{T}_0(A)$.
\end{proof}

Remark: alternatively, one can prove this by pretending all remaining rectangles to be squares, and put them into a box utilizing Theorem 1 in \cite{meir} by Meir and Moser, and this approach would easily generalize to higher dimension packing and other shapes.

\section{Discussion}

(Methods of) theorem 2 enables us to compute $\eta_0(A)$ from $\eta_0(A^n)$ with error terms. (Methods of) theorem 1 can extend the result of $\eta_0$ at one point in the configuration space to nearby points, with error terms. (note that $\eta$ is invariant under scaling, so one can first do a scaling, then use the estimation.)

The perfect packing problem is just $\eta_0^{-1}(1)$. It does not seem easy to determine whether there does not exist perfect packing (unless the enclosing rectangle is required to be a specific shape) before, while the ability to compute $\eta_0$ with error terms will mean that can be determined in finite steps (as long as $\eta_0\neq1$), thus can give negative results on perfect packing problems. Since the task of computing $\eta_0$ is hard \cite{huang}, it might be not practical to compute it, but the methods are still applicable to weaker questions, such that whether a particular algorithm can do the perfect packing. (However, the author does not claim that the actual computations would be any easier.)

We conclude with a result on the lower bound of $\eta_0$, which was somewhat unexpected at first glance. Let $\Delta{A}=(\Delta{x},0,...,0)$, $A'=A+\Delta{A}$. Let $\Delta{x}>0$ and $\eta_0(A')<\eta_0(A)$. We construct as in Lemma \ref{t_0}. Firstly, $|\Delta\eta_0|=-\Delta\eta_0\leqslant\eta_0(A)-\eta(A'_w)\leqslant{S}/T_0-(S+l_1\Delta{x})/(T_0+q_0\Delta{x})=(Sq_0-T_0l_1)\Delta{x}/(T_0(T_0+q_0\Delta{x}))\leqslant(Sq_0-T_0l_1)\Delta{x}/(T_0q_0\Delta{x})=S/T_0-l_1/q_0=\eta_0-l_1/q_0$, i.e.\ $\eta_0(A)-\eta_0(A')\leqslant\eta_0(A)-l_1/q(A_0)$. We get $\eta_0(A')\geqslant{l_1}/q(A_0)$. Secondly, since $0\leqslant|\Delta{\eta_0}|$, we have $\eta_0(A)\geqslant{l_1}/q(A_0)$. Now for all $\Delta{x}>0$, if $\eta_0(A+\Delta{A})$ is larger than that of $A$, it is larger than $l_1/q(A_0)$, which follows from the second argument. If it is smaller, it is still larger than $l_1/q(A_0)$, which is the first argument. To summarize, if there exists an $A'$ such that $\Delta{x}>0$ and $\eta_0(A')<\eta_0(A)$, we have a \emph{constant} lower bound for $\eta_0(A+\Delta{A})$ for all $\Delta{x}>0$. (If there does not exist such $A'$, that would simply mean $\eta_0(A+\Delta{A})$ is bounded from below by $\eta_0(A)$ for $\Delta{x}>0$.) (As a corollary, whenever rectangles expand, the packing efficiency can never approach zero.)

\section{Acknowledgement}

I am grateful to Professor Ma who raised valuable comments and questions on the manuscript.

\end{document}